\definecolor{violet}{rgb}{0.0,0.2,0.7}
\definecolor{rouge2}{rgb}{0.8,0.0,0.2}
\newcommand{\R}{\mathbb{R}}
\newcommand{\CC}{\mathbb{C}}
\newcommand{\Q}{\mathbb{Q}}
\newcommand{\Z}{\mathbb{Z}}
\newcommand{\B}{\mathbb{B}}
\newcommand{\vp}{\varphi}
\renewcommand{\O}{\mathcal{O}}
\newcommand{\Ox}{\mathcal{O}_{X}}
\newcommand{\ep}{\varepsilon}
\newcommand{\la}{\langle}
\newcommand{\ra}{\rangle}
\renewcommand{\ge}{\geqslant}
\renewcommand{\le}{\leqslant}
\newcommand{\Ric}{\mathrm{Ric} \,}
\newcommand{\om}{\omega}
\newcommand{\ddc}{dd^c}
\newcommand{\vpt}{\varphi_t}
\newcommand{\xreg}{X_{\rm reg}}
\newcommand{\vpe}{\varphi_{\varepsilon}}
\newcommand{\te}{\tau_{\varepsilon}}
\newcommand{\sd}{\mathrm{Supp}(D)}
\newcommand{\D}{D}
\newcommand{\ome}{\om_{\varepsilon}}
\newcommand{\omt}{\om_{t}}
\newcommand{\omr}{\om_{\rho}}
\newcommand{\omre}{\om_{\rho_{\ep}}}
\newcommand{\Supp}{\mathrm {Supp}}
\newcommand{\sm}{\setminus}
\newcommand{\tr}{\mathrm{tr}}
\newcommand{\psie}{\psi_{\ep}}
\newcommand{\rhoe}{\rho_{\ep}}
\newcommand{\pp}{\psi^{+}}
\newcommand{\psm}{\psi^{-}}
\newcommand{\ppm}{\psi^{\pm}}
\newcommand{\amp}{\mathrm{Amp}(\alpha)}
\newcommand{\bp}{\B_+(\alpha)}
\newcommand{\vol}{\mathrm{vol}}
\newcommand{\MA}{\mathrm{MA}}
\newcommand{\MAl}{(\mathrm{MA}_{\lambda})}
\newcommand{\KEl}{(\mathrm{KE_{\lambda}})}
\newtheorem*{thma}{Theorem A}
\newtheorem*{thmb}{Theorem B}
\begin{document}
\title[Conic Kähler-Einstein metrics on klt pairs]{Kähler-Einstein metrics with cone singularities on klt pairs}

\date{\today}
\author{Henri Guenancia}
\address{Institut de Mathématiques de Jussieu \\
Université Pierre et Marie Curie \\
 Paris 
\& Département de Mathématiques et Applications \\
\'Ecole Normale Supérieure \\
Paris}
\email{guenancia@math.jussieu.fr}
\urladdr{www.math.jussieu.fr/~guenancia}
\maketitle
\tableofcontents

\section{Introduction}

Let $(X,D)$ be a Kawamata log-terminal pair (shortened in \textit{klt}), i.e. $X$ is a normal projective variety over $\CC$ of dimension $n$, and $D$ is an arbitrary $\Q$-divisor such that $K_X+D$ is $\Q$-Cartier, and for some (or equivalently any) log-resolution $\pi : X'\to X$, we have: 
\[K_{X'}=\pi^*(K_X+D)+\sum a_i E_i\]
where $E_i$ are either exceptional divisors or components of the strict transform of $D$, and the coefficients $a_i$ satisfy the inequality $a_i> -1$. \\

For such a pair $(X, D)$, there exists a natural notion of Kähler-Einstein metric developed by Eyssidieux, Guedj and Zeriahi in \cite{EGZ} for the non-positively curved case (ie $K_X+D$ ample or trivial), and extended by Berman, Boucksom and the previous authors in \cite{BBEGZ} for log-Fano varieties ($-(K_X+D)$ ample). Moreover, if $K_X+D$ is merely big, there exists also a unique Kähler-Einstein metric thanks to the finite generation of the ring $\oplus_{m\ge 0} H^0(X,m(K_X+D))$ proved by \cite{BCHM}, cf. e.g. \cite[section 6]{BEGZ} or the explanations below Theorem A. In that case, there is a particular Zariski open subset of $X$ introduced by Boucksom in \cite{DZD} called ample locus and denoted by $\mathrm{Amp}(K_X+D)$ which plays a special role : it is an open subset where the cohomology class $c_1(K_X+D)$ "looks like" a Kähler class. We refer to section \ref{ssct} for the definitions of those objects. \\

Those objects are currents living on a log-resolution of the pairs, and are very particular from the point of view of pluripotential theory (they have finite energy). Moreover, those currents are shown to induce genuine Kähler-Einstein metrics on the Zariski open subset $X_0:=\xreg\setminus \sd$. However, little is known on the behavior of the Kähler-Einstein metrics near the degeneracy locus $X\setminus X_0$. \\ 

As understanding the degeneracy of the Kähler-Einstein metric near the singularities of $X$ seems out reach for the moment, we will focus on the behavior near the support of $D$ intersected with the regular locus of $X$. It turns out that this is equivalent to control the singularities of the solution of a degenerate Monge-Ampère equation on a \textit{smooth manifold}, the term "degenerate" meaning here that the solution lives in a class which is not Kähler (but at least big, or nef and big), and that the right-hand side is not smooth (cf Theorem B). To avoid the singularities of the pair $(X,D)$, we will restrict to study the Kähler-Einstein metric on the open subset of $X$ which is not affected by the log-resolution: this is the set of points $x$ where $(X,D)$ is log-smooth at $x$, in the sense that $x$ is a smooth point of $X$, and there is a (small) analytic neighborhood $U$ of $x$ such that $\Supp(D) \cap U= \{z_1 \cdots z_k=0\}$ for some holomorphic coordinates $z_1, \ldots, z_n$. \\

\begin{thma}
Let $(X,D)$ be a klt pair, and let $\mathrm{LS}(X,D):= \{x\in X; (X,D)$ is log-smooth at $x\}$. We assume that the coefficients of $D$ are in $[1/2, 1)$.
\begin{enumerate}
\item[$(i)$] If $K_X+D$ is big, then the Kähler-Einstein metric of $(X,D)$ has cone singularities along $D$ on $\mathrm{LS}(X,D)\cap \mathrm{Amp}(K_X+D)$. 
\item[$(ii)$] If $-(K_X+D)$ is ample, then any Kähler-Einstein metric of $(X,D)$ has cone singularities along $D$ on $\mathrm{LS}(X,D)$.
\end{enumerate}
\end{thma}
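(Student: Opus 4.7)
The plan is to reduce both statements to a two-sided Laplacian estimate of the form $C^{-1}\om_{\rm cone} \le \ome \le C\,\om_{\rm cone}$ for a smooth approximating family $\ome$ of the Kähler-Einstein current. First I would pass to a log-resolution $\pi : X' \to X$, where the Kähler-Einstein equation for $(X,D)$ becomes a Monge-Ampère equation $\MA(\vp) = e^{\pm \vp} f\,dV$ in the big (resp.\ Kähler) class $\pi^*c_1(K_X+D)$ on $X'$, with $f$ having explicit zeros and poles along the strict transform $\pi_*^{-1}D$ and the $\pi$-exceptional divisors dictated by the discrepancies $a_i$. This is the content of Theorem B below. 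I then regularize this singular equation as in \cite{EGZ, BBEGZ}, producing a family $\MA(\vpe) = e^{\pm \vpe}\fe\,\ome^n$ with a smooth Kähler form $\ome$ and a uniform $C^0$ estimate on $\vpe$.

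The main estimate is a Chern-Lu inequality on $\pi^{-1}(\mathrm{LS}(X,D)\cap\amp)\setminus \Sp(\pi^{-1}D)$ comparing $\ome$ to a local model conic metric $\om_{\rm cone}=\sum_j \frac{i\,dz_j\wedge d\bar z_j}{|z_j|^{2b_j}}$, where $b_j\in[1/2,1)$ are the coefficients of $D$. Applied to $\tre\om_{\rm cone}$, this inequality reads schematically $\Delta_{\ome}\log\tre\om_{\rm cone} \ge -\tre\Ric(\ome) - B\,\tre\om_{\rm cone}$, where $B$ is an upper bound for the holomorphic bisectional curvature of $\om_{\rm cone}$. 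The Kähler-Einstein equation gives $\Ric(\ome)$ bounded below (up to a current of integration along the transform of $D$ that can be absorbed by a small barrier $\delta\log\prod_j|z_j|^{2(1-b_j)}$), so the maximum principle applied on relatively compact subdomains yields $\tre\om_{\rm cone}\le C$, i.e.\ $\ome \ge C^{-1}\om_{\rm cone}$. The reverse inequality follows from the $L^\infty$ control on $\fe$ combined with $\ome^n/\om_{\rm cone}^n$ being pinched, via the standard identity $\tr_{\om_{\rm cone}}\ome \le n(\ome^n/\om_{\rm cone}^n)(\tre\om_{\rm cone})^{n-1}$.

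The main obstacle is exactly the bisectional curvature bound for $\om_{\rm cone}$: a direct computation shows that an upper bound on the holomorphic bisectional curvature of $\om_{\rm cone}$ holds precisely when every $b_j \ge 1/2$, and this is what forces the hypothesis $b_j \in [1/2,1)$; without it, Chern-Lu produces a term with the wrong sign and the method breaks down. A secondary difficulty, specific to case $(i)$, is that $\pi^*c_1(K_X+D)$ is only big and nef, not Kähler. I would handle this by a Tsuji-type trick, adding to the test quantity $\log\tre\om_{\rm cone}$ an auxiliary weight $\lambda\,\psi$ where $\psi$ is a quasi-psh function with logarithmic poles along $X'\sm \pi^{-1}(\amp)$; such a $\psi$ exists by the definition of the ample locus, and since $\psi\to -\infty$ at the boundary of the relevant domain, the maximum is attained in the interior and the estimate localizes correctly on $\pi^{-1}(\amp\cap \mathrm{LS}(X,D))$. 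Letting $\ep\to 0$ and using the uniqueness and regularity results recalled in the introduction then yields $(i)$; in case $(ii)$ one has $\amp=X$ and the Tsuji step is unnecessary, giving the statement on all of $\mathrm{LS}(X,D)$.
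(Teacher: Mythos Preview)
Your global framework—log-resolution, regularization, a Laplacian estimate, and a Tsuji-type barrier for the non-Kähler class—parallels the paper's. The substantive divergence, and the gap, is in the Laplacian estimate itself. You invoke Chern--Lu for $\tre\om_{\rm cone}$, which needs an \emph{upper} bound on the holomorphic bisectional curvature of the target $\om_{\rm cone}$, and you assert that this upper bound is exactly what forces $b_j\ge 1/2$. That is not how the restriction arises. The paper (following \cite{CGP}) uses instead the Siu/Aubin--Yau inequality $\Delta_{\omt'}\log\tr_{\omre}\omt'\ge -C\,\tr_{\omt'}\omre$ with reference the smooth \emph{approximate} cone metric $\omre$, and this requires a uniform \emph{lower} bound on the bisectional curvature of $\omre$; the computation in \cite[\S4.3--4.4]{CGP} shows that such a lower bound holds precisely when every $a_i\ge 1/2$. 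The local model cone metric you work with is flat, so its bisectional curvature vanishes identically, and your Chern--Lu step, if it could be made rigorous, would impose no restriction on the coefficients at all. Either your method proves strictly more than the paper (which would need careful justification—in particular, the maximum principle with an incomplete singular target and only local comparison data), or you have reversed the roles of upper versus lower curvature bounds.

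There is a second place where $a_i\ge 1/2$ enters in the paper that your sketch does not address: the lower bound $\ddc F_\ep\ge -C\,\omre$ for $F_\ep=\log\Bigl(\frac{dV}{\prod(|s_i|^2+\ep^2)^{a_i}\,\omre^n}\Bigr)$, needed in the P\u{a}un-type absorption of the singular right-hand side (condition~$(i)$ of Proposition~\ref{mp}); this is established in \cite[\S4.2.3]{G12} and again uses $a_i\ge 1/2$. Your appeal to ``$L^\infty$ control on $f_\ep$'' and a pinched volume-form ratio is not a substitute for this Hessian bound. Finally, in case~$(i)$ the class $\pi^*c_1(K_X+D)$ is only big, not nef; the paper first passes to the log-canonical model via \cite{BCHM} to reduce to the semipositive and big situation before running the estimates, a step your sketch omits.
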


In order to relate this result to previous works, we should mention that a result like Theorem A was already known when $(X,D)$ is a log-smooth pair with $K_X+D$ ample. More precisely, this result was obtained in \cite{Brendle,JMR}  whenever $D$ is irreducible and in \cite{CGP} when the coefficients of $D$ are in $[1/2, 1)$. Moreover, the general (Kähler) case has been announced by R. Mazzeo and Y. Rubinstein in \cite{MR}. \\

Let us mention that we also prove a similar result for the Ricci-flat case : if under the same set-up, we are given a nef and big class $\alpha$, and assume that $c_1(K_X+D)$ is trivial, then the Ricci-flat metric in $\alpha$ has cone singularities along $D$ on  $\mathrm{LS}(X,D)\cap \mathrm{Amp}(K_X+D)$. As the result is a bit less natural, we chose not to include it in the previous Theorem.

We should also mention that $\mathrm{LS}(X,D)$ is a Zariski open subset of $X$ with complement of codimension at least $2$, as it contains the intersection of $X_{\rm reg}$ and the regular locus of $D_{\rm red}$.\\

Let us explain case $(i)$ in Theorem A. For the moment, we do not know any regularity result for solutions of Monge-Ampère equations in general big classes. But in the case of adjoint bundles coming from klt pairs, the main theorems of the Minimal Model Program (MMP) theory enable us to reduce to the semipositive and big case which is far better understood. Let us now be more precise:\\

 Let $(X,D)$ be a klt pair of log-general type, ie $K_X+D$ is assumed to be big. By the fundamental results of \cite[Theorem 1.2]{BCHM}, we know that $(X,D)$ has a log-canonical model $f:X \dashrightarrow X_{\rm can}$ where $X_{\rm can}$ is a projective normal variety and $f$ is a birational contraction such that $f_*(K_X+D)=K_{X_{\rm can}}+D_{\rm can}$ is ample (here $D_{\rm can}:=f_*D$). If $\mu: Y \to X$, $\nu: Y \to X_{\rm can}$ is a resolution of the graph of $f$, then \[\mu^* (K_X+D)=f^*(K_{X_{\rm can}}+D_{\rm can})+E\] for some effective $\nu$-exceptional divisor $E$.   
A consequence of this Zariski decomposition is that every positive current $T$ in $c_1(K_X+D)$ comes from a unique positive current $S\in c_1(K_{X_{\rm can}}+D_{\rm can})$ in the following way: $\mu^*T= \nu^*S + [E]$. In particular the Kähler-Einstein metric on $(X,D)$ constructed in \cite[Theorem 6.4]{BEGZ} (see also Theorem \ref{thm:l1} for the general case of a klt pair) corresponds in this way to the Kähler-Einstein metric on $(X_{\rm can}, D_{\rm can})$ constructed in \cite[Theorem 7.12]{EGZ}. In particular, this last metric is smooth outside of the singular locus of $X_{\rm can}$ and $\mathrm{Supp}(D_{\rm can})$. \\

Moreover, as the log-canonical model $f:X \dashrightarrow X_{\rm can}$ induces an isomorphism from the ample locus of $K_X+D$ onto its image (this is a general property for the maps attached to big linear systems, which follows directly from the definition of $\mathbb B_+(K_X+D)$), it is enough for our matter to understand the Kähler-Einstein metric of $(X_{\rm can}, D_{\rm can})$. So we are reduced to working in semipositive and big classes as explained above. \\

Once this reduction is done, we can express the problem in terms of Monge-Ampère equations (cf section \ref{red}); the framework is the following one: let $X$ be a compact Kähler manifold of dimension $n$, $\alpha$ a nef and big class class and $\theta\in \alpha$ a smooth representative. Let $E=\sum c_j E_j$ be an effective $\R$-divisor with snc support such that $\alpha -E$ is Kähler. Let also $D= \sum a_i D_i$ an effective divisor with snc support such that $E$ and $D$ have no common components, and that $E+D$ has snc support. 

We write $X_0=X\setminus (\Supp(E) \cup \Supp(D))$, we choose non-zero global sections $t_j$ of $\O_X(E_j)$ and $s_i$ of $\O_X(D_i)$, and we choose some real numbers $b_j>-1$. We choose some smooth hermitian metrics on those bundles which we normalize so that $\int_X \prod |t_j|^{2b_j} \prod |s_i|^{-2a_i}dV=\vol(\alpha)$. Finally, if $\vp$ is a $\theta$-psh function, we denote by $\MA(\vp)$ the non-pluripolar product $\la (\theta+\ddc \vp)^n\ra$ in the sense of \cite{BEGZ}, cf. section \ref{bcc2}.\\

\begin{thmb}
\phantomsection
\label{thm:cone}
We assume that the coefficients of $D$ satisfy the inequalities $a_i \ge 1/2$ for all $i$. 
Then any solution with full Monge-Ampère mass of 
\[\MA(\vp)= \prod |t_j|^{2b_j} \frac{e^{\lambda \vp}dV}{\prod |s_i|^{2a_i}}\]
defines a smooth metric on $X_0$ having cone singularities along $D$.
\end{thmb}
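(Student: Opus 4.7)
The plan is a smooth-approximation scheme: I would build a family $\vpe$ of solutions to a regularized equation which are themselves smooth on $X\sm\sd$ and conic along $D$, derive uniform higher-order estimates on compact subsets of $X_0$, and pass to the limit. Concretely, fix a Kähler form $\om$ on $X$ and consider
\[\la (\theta + \ep\om + \ddc \vpe)^n \ra = c_\ep \prod (|t_j|^2 + \ep^2)^{b_j} \frac{e^{\lambda \vpe}\, dV}{\prod |s_i|^{2a_i}}.\]
Only the factor coming from the nef-and-big degeneracy is regularized, while the factor responsible for the cone behavior is not. For $\ep>0$ the class $\alpha + \ep\{\om\}$ is Kähler and the right-hand side has exactly the shape treated in the log-smooth Kähler case of \cite{CGP}, so a normalized solution $\vpe$ exists, is smooth on $X\sm\sd$, and has cone singularities along $D$ with the expected angles.

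Next I would establish two uniform a priori estimates. The bound $\|\vpe\|_\infty \le C$ follows from Ko{\l}odziej-type pluripotential techniques as in \cite{BEGZ, BBEGZ}: since $a_i<1$ and $b_j>-1$, the right-hand side lies in $L^p$ for some $p>1$, uniformly in $\ep$. The Laplacian bound is the crux. Following \cite{JMR, CGP}, on each log-smooth chart where $\sd = \{s_1\cdots s_k=0\}$ I would take as a model
\[\om_{\rm cone} := \om + \ddc \sum_i |s_i|^{2(1-a_i)},\]
which is Kähler on $X_0$ with cone angle $2\pi(1-a_i)$ along $D_i$ and, under the standing assumption $a_i \ge 1/2$, has holomorphic bisectional curvature bounded from above. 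A Chern-Lu computation for $u := \log \tre \om_{\rm cone} - A\vpe$ then yields, on $X_0\sm\Sp(E)$, an inequality of the form
\[\Delta_\ome u \ge -C_1 + (A-C_2)\,\tre \om_{\rm cone}.\]
Adding an auxiliary term $\delta \sum \log|t_j|^2$ that blows up along $\Sp(E)$ and applying the maximum principle on $X\sm\Sp(E)$, then letting $\delta\to 0$, gives $\tre \om_{\rm cone} \le C$ uniformly on compact subsets of $X_0$. The reverse inequality $\mathrm{tr}_{\om_{\rm cone}} \ome \le C$ comes from rewriting the equation and combining with the $L^\infty$ bound.

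Once $\ome$ and $\om_{\rm cone}$ are uniformly equivalent on compact subsets of $X_0$, the equation read in cone coordinates $w_i = s_i^{1-a_i}$ becomes a uniformly elliptic complex Monge-Ampère equation with Hölder right-hand side. Standard Evans-Krylov and cone-Hölder Schauder theory as in \cite{JMR, CGP} give uniform $C^{k,\alpha}$ estimates on $\vpe$ for every $k$ on any compact subset of $X_0$. Passing to the limit along a subsequence $\ep\to 0$ produces a solution that is smooth on $X_0$ and has cone singularities along $D$; uniqueness of solutions with full Monge-Ampère mass \cite{BEGZ} identifies this limit with the given $\vp$.

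The main obstacle is the Laplacian estimate itself. One must produce a reference cone metric with upper-bounded holomorphic bisectional curvature along a snc divisor whose coefficients can be arbitrarily close to $1$ — this is exactly what the assumption $a_i \ge 1/2$ makes possible — and simultaneously run the Chern-Lu argument in the presence of the polar behavior of the right-hand side along $\Sp(E)$ coming from the non-Kähler class $\alpha$. Combining the conic structure along $D$ with the degeneracy along $E$ inside a single maximum-principle argument on the non-compact set $X\sm\Sp(E)$ is the principal technical point.
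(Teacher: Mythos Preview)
Your approach is genuinely different from the paper's: you keep the cone factor $\prod|s_i|^{-2a_i}$ and regularize the $|t_j|^{2b_j}$ factor together with the class, then run a Chern--Lu argument with the cone metric as \emph{target} (upper bisectional bound). The paper does the opposite: it regularizes the cone factor to $(|s_i|^2+\ep^2)^{-a_i}$, keeps $|t_j|^{2b_j}$ untouched, freezes $e^{\lambda\vp}$ as $e^{\lambda\tau_\ep}$ via Demailly approximation, and runs a Siu/Yau--type inequality with the \emph{smoothed} cone metric $\om_{\rho_\ep}$ as \emph{reference} (lower bisectional bound from \cite{CGP}), absorbing the singular factor $\prod|t_j|^{2b_j}$ through P\u{a}un's trick of adding $\psi^-$ to the auxiliary function.

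There are two genuine gaps in your scheme. First, for $\lambda<0$ your perturbed equation is of Fano type and need not admit \emph{any} solution $\vpe$; \cite{CGP} gives regularity of solutions, not existence in that sign. The paper avoids this by replacing $e^{\lambda\vp}$ by $e^{\lambda\tau_\ep}$ with $\tau_\ep\searrow\vp$ smooth, so each approximate equation has a fixed $L^p$ right-hand side and is solvable by \cite{BEGZ}. Second, and more seriously, your Chern--Lu constant $C_1$ depends on a lower bound for $\Ric\ome$, but after smoothing $|t_j|^{2b_j}$ one has $\Ric\ome\supset -\sum_{b_j>0} b_j\,\ddc\log(|t_j|^2+\ep^2)$, whose normal component behaves like $-b_j\ep^2/(|t_j|^2+\ep^2)^2$ and is \emph{not} uniformly bounded below near $\Supp(E)$ as $\ep\to 0$. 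The barrier $\delta\sum\log|t_j|^2$ forces the maximum into $X\setminus\Supp(E)$, but the differential inequality itself fails uniformly there, so the argument does not close. The paper's Siu--type inequality places the curvature hypothesis on the fixed reference $\om_{\rho_\ep}$ rather than on $\ome$, and handles the $|t_j|^{2b_j}$ contribution by incorporating $\psi^-=-\sum_{b_j<0}b_j\log|t_j|^2$ (and, when $\lambda<0$, the frozen $-\lambda\tau_\ep$) directly into the function to which the maximum principle is applied; this is exactly what makes the estimate uniform across $\Supp(E)$.
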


Let us mention that if $\lambda \ge0$, such a solution always exists and is unique. If $\lambda<0$, there might be no solution, or on the contrary, many ones. 

In order to deduce Theorem A from Theorem B, one considers a log-resolution of the pair $(X,D)$ which also computes the augmented base locus as an SNC divisor meeting the strict transform of $D$ properly. The Monge-Ampère equation giving the Kähler-Einstein metrics pulls back to this resolution and has exactly the form considered above. Finally, we observe that a log-resolution of a pair $(X,D)$ is an isomorphism on the previously defined log-smooth locus $\mathrm{LS}(X,D)$, so we are done.\\

One may remark that to deduce Theorem A from Theorem B, it would have been enough to assume $\alpha$ semi-positive and big. As the proof of the nef and big case is not really more complicated, we chose to state the Theorem in this slightly greater generality.
We should also add that these last results of are expected to be valid in the more general case where $\alpha$ is only big and not necessarily nef. However, even when $D=0$, we were not able to prove that the Kähler-Einstein metrics are smooth on the ample locus of $\alpha$, and new ideas shall probably be needed to settle this question. \\

In fact, using the same techniques appearing in the proof of Theorem B, we can formulate a slightly more general result (cf. Theorem \ref{gen}): from the Monge-Ampère point of view (Theorem B), we do not need that the factors $|t_j|^{-2b_j}$ come from the augmented base locus $E$. Therefore, even if $D$ has coefficients in $(0,1)$ and not only $[1/2,1)$, one can still prove that the Kähler-Einstein metrics in Theorem A will have cone singularities along $\sum_{a_i\ge 1/2}D_i$ when restricted to $X_{\rm reg} \cap \mathrm{LS}(X,D) \setminus \Supp(\sum_{a_i<1/2} D_i)$ (and intersected with $\mathrm{Amp}(K_X+D)$ in case $(i)$).

\section*{Acknowledgements}

I am indebted to Sébastien Boucksom who has patiently and thoroughly read the preliminaries versions of this work. His enlightening suggestions and remarks helped a lot to improve the exposition and the content of the present article.

\section{Monge-Ampère equations in big cohomology classes}
\label{bcc}

In this section, we recall some generalities on big cohomology classes on a compact Kähler manifold, and then give an outline of the paper by Boucksom, Eyssidieux, Guedj and Zeriahi \cite{BEGZ} which we are going to rely on.

\subsection{Generalities on big cohomology classes}

We start with a compact Kähler manifold $X$ of dimension $n$, and we consider a class $\alpha \in H^{1,1}(X,\R)$ which is big. By definition, this means that $\alpha$ lies in the interior of the pseudo-effective cone, so that there exists a Kähler current $T\in \alpha$, that is a current which dominates some smooth positive form $\omega$ on $X$.\\

\subsubsection{The ample locus of $\alpha$}
\label{ssct}

One may define, following S. Boucksom \cite[\textsection 3.5]{DZD}, the \textit{ample locus} of $\alpha$, denoted $\amp$, which is the largest Zariski open subset $U$ of $X$ such that for all $x\in U$, there exists a Kähler current $T_x\in \alpha$ with analytic singularities such that $T_x$ is smooth in an (analytic) neighbourhood of $x$. A generalization of Kodaira's lemma asserts that for some modification $\pi:X'\to X$, one can write $\pi^*\alpha = \beta + E$ where $\beta$ is a Kähler class and $E$ is an effective divisor on $X'$. 
Then one has the following characterization of the complement of $\amp$, denoted $\B_+(\alpha)$ and called \emph{augmented base locus} in analogy with the case where $\alpha$ is a big class in the real Néron-Severi group $NS(X)\otimes \R$ (cf \cite{ELMNP}, \cite[Lemma 1.4]{BBP}):
\[\bp = \bigcap_{\pi^*\alpha-E \, \,  \mathrm{K\ddot{a}hler}} \pi(\Supp(E)) \]
where $E$ ranges over all effective $\R$-divisor in a birational model $\pi:X'\to X$ such that $\pi^*\alpha-E$ is a Kähler class.\\

At this point, two remarks need to be made. The first one is that the augmented base locus (or equivalently the ample locus) of a big class $\alpha$ can be computed by a single modification $\pi:X' \to X$. 

\noindent
Indeed, by the noetherianity of $X$ for the (holomorphic) Zariski topology, there exists a Kähler current $T\in \alpha$ with analytic singularities such that the singular locus of $T$ is exactly $\bp$. Resolving the singularities of $T$ (cf \cite[\textsection 2.6]{DZD}), one obtains a morphism $\pi:X' \to X$ such that $\pi^*T=\theta + [E]$ where $\theta \ge \pi^* \omega$ for any Kähler form $\omega$ on $X$ dominated by $T$, and $E=\sum a_i E_i$ is an effective $\R$-divisor lying above the singular locus of $T$ (so it is not necessarily exceptional because $\bp$ might have one-codimensional components). Moreover, as $\pi$ is a birational morphism between smooth varieties (actually we use here that $X$ is locally $\Q$-factorial) there exists some positive linear combination of exceptional divisors $F= \sum b_i E_i$ such that $-F$ is $\pi$-ample (cf \cite[II, ex. 7.11 (c)]{Har} or \cite[1.42]{Deb}). Therefore, for a sufficiently small $\ep$, the cohomology class of $\theta-\ep F$ contains a Kähler form, so that we have the following decomposition: 
\[\pi^* \alpha = \{\theta-\ep F\}+(E+\ep F)\]
with $\{\theta-\ep F\}$ Kähler, and $E+\ep F$ effective, with support equal to $\Supp(E)$. Therefore $\bp = \pi(\Supp(E))$, which shows that $\bp$ can be indeed computed by a single modification of $X$.\\

The second remark we would like to do about the notion of ample locus concerns the case when $\alpha=c_1(L)$ is the Chern class of a line bundle. In that case, there is no need to perform modifications of $X$ to compute $\bp$, as is shown in \cite[Remark 1.3]{ELMNP}: 
\[\mathbb B _+(L) = \bigcap_{\substack{L=A+E \\ A \,  \mathsmaller{a\!m\!pl\!e,} \,  E \ge 0}} \Supp(E) \]

\subsubsection{Currents with minimal singularities}

We will be very brief about this well-known notion, and refer e.g. to \cite[\textsection 2.8]{DZD}, \cite[\textsection 1]{BBGZ}, \cite{Ber2} or \cite{BD} for more details and recent results. 

\noindent
By definition, if $T,T'$ are two positive closed currents in the same cohomology class $\alpha$, we say that $T$ is less singular than $T'$ if the local potentials $\vp, \vp'$ of these currents satisfy $\vp' \le \vp + O(1)$. It is clear that his definition does not depend on the choice of the local potentials, so that the definition is coherent. In each (pseudo-effective) cohomology class $\alpha$, one can find a positive closed current $T_{\rm min}$ which will be less singular than all the other ones; this current is not unique in general; only its class of singularities is. Such a current will be called current with minimal singularities.  

One way to find such a current is to pick $\theta \in \alpha$ a smooth representative, and define then, following Demailly, the upper envelope 
\[V_{\theta}:= \sup \{\vp \,\,  \theta \mathrm{-psh} , \,  \vp \le 0 \,  \, \textrm{on} \, \, X\}\] 
Once observed that $V_{\theta}$ is $\theta$-psh (in particular upper semi-continuous), it becomes clear that $\theta+ \ddc V_{\theta}$ has minimal singularities. \\

\subsection{Non-pluripolar product and Monge-Ampère equations}
\label{bcc2}

\subsubsection{Non-pluripolar product}
In the paper \cite{BEGZ}, the \textit{non-pluripolar product} $T \mapsto \la T^n \ra $ of any closed positive $(1,1)$-current $T\in \alpha$ is shown to be a well-defined measure on $X$ putting no mass on pluripolar sets. Given now a $\theta$-psh function $\vp$, one defines its non-pluripolar Monge-Ampère by 
$\MA(\vp) := \la (\theta + \ddc \vp)^n \ra$. Then one can check easily from the construction that the total mass of $\MA(\vp)$ is less than or equal to the volume $\vol(\alpha)$ of the class $\alpha$:
\[ \int_X \MA(\vp) \le \vol(\alpha)\]
A particular class of $\theta$-psh functions that appears naturally is the one for which the last inequality is an equality. We will say that such functions (or the associated currents) have \textit{full Monge-Ampère mass}. 	\\

Let us consider now the case of $\theta$-psh functions with minimal singularities. By definition, they are locally bounded on $\amp$, so that one can consider on this open set their Monge-Ampère $(\theta+ \ddc \vp)^n$ in the usual sense of Bedford-Taylor. Then one can see that the trivial extension of this measure to $X$ coincide with $\MA(\vp)$ and satisfies
\[\int_X \MA(\vp) = \vol(\alpha)\]   
In particular, currents with minimal singularities have full Monge-Ampère mass, the converse being false however. An observation that dates back to S. Boucksom \cite[Proposition 3.6]{DZD} show that whenever $\alpha$ is nef and big, then the positive currents in $\alpha$ having minimal singularities automatically have zero Lelong numbers, or equivalently, using Skoda's integrability theorem \cite{Sko}, their potentials $\vp$ satisfy $e^{-\vp}  \in L^{p} $ for all $p \ge 1$. 

Very recently, a similar statement has been obtained for semi-positive and big classes by Berman, Boucksom, Eyssidieux, Guedj and Zeriahi. The precise statement is the following one: 

\begin{theo}[{\cite[Theorem 1.1]{BBEGZ}}]
\phantomsection
\label{zln}
Let $X$ be a normal compact complex space endowed with a fixed Kähler form $\om_0$. Let $\vp$ be an $\om_0$-psh function with full Monge-Ampère mass, and $\pi:X'\to X$ be any resolution of singularities of $X$. Then $\vp':= \vp \circ \pi$ has zero Lelong numbers everywhere. Equivalently, $e^{-\vp'}\in L^p(X')$ for all $p\ge 1$.
\end{theo}

This result will be very helpful for the proof of Theorem B. If we did not have it, then in the case $\lambda<0$, we should have added the assumption that $\vp$ has minimal singularities.

\subsubsection{Monge-Ampère equations in big cohomology classes}
\label{sct}

One of the main results of the paper \cite{BEGZ}, is that for every non-pluripolar measure $\mu$, there exists a unique positive current $T_{\mu}\in \alpha$ with full Monge-Ampère mass satisfying the Monge-Ampère equation \[\la T^n \ra = \mu\] 
The strategy of the proof is to consider approximate Zariski decompositions $X_k \overset{\pi_k}{\longrightarrow} X$ (with $\pi_k^*\alpha = \beta_k+[E_k]$ where $\beta_k$ is Kähler and $E_k$ effective) and to solve $\la S_k^n \ra = \frac{\vol(\beta_k)}{\vol(\alpha)} \pi_k^* \mu$ with $S_k \in \beta_k$, which is possible thanks to the main result of \cite{GZ07} ($\beta_k$ is Kähler). Then one needs to prove that $T_k:= (\pi_k)_*(S_k+[E_k])$ converges to some current $T$ with full Monge-Ampère mass solution of the initial equation.\\

In that same paper, the authors of \cite{BEGZ} obtain $L^{\infty}$ estimates of the potential of the solution $T$ whenever the measure $\mu=f dV$ has $L^{1+\ep}$-density with respect to the Lebesgue measure. More precisely, they get the following result \cite[Theorem 4.1]{BEGZ}: the normalized potential $\vp$ (ie $\max_X \vp = 0$) solution of $\MA(\vp)=\mu$ satisfies \[\vp \ge V_{\theta} -M ||f||^{1/n}_{L^{1+\ep}}\] where $V_{\theta}$ is the upper envelope $V_{\theta}:=\sup\{\psi \, \, \theta \mathrm{-psh}, \psi \le 0 \, \, \mathrm{on} \, \, X\}$ defined in the previous section, and $M$ depends only on $\theta, dV$ and $\ep$.\\

Finally, if $\mu=dV$ is now a smooth volume form, and under the additional assumption that $\alpha$ is nef and big, then \cite[Theorem 5.1]{BEGZ} asserts that the solution $T_{\mu}$ is smooth on the ample locus $\amp$. Very little is known however about the behavior of $T_{\mu}$ along $\bp$, even in the case where $\alpha$ is semi-positive (and big).

\subsection{The equation \texorpdfstring{$\MA(\vp)=e^{\vp}\mu$}{MA(phi)=e^{phi} mu}}
In this section, we focus on the equation $\MA(\vp)=e^{\vp}\mu$, where $\mu$ is a non-pluripolar measure. As we explained in the introduction, this equation is related to negatively curved Kähler-Einstein metrics. \\ 

Whenever $\mu=dV$ is a smooth volume form, we have at our disposal \cite[Theorem 6.1]{BEGZ} which garantees that the previous equation admits a unique solution $\vp$ $\theta$-psh with full Monge-Ampère mass (we are still assuming that $\alpha=\{\theta\}$ is a big class).\\
In fact, their proof can be readily adapted to the case where $\mu$ has a $L^{1+\ep}$ density with respect to the Lebesgue measure:
\begin{theo}
\phantomsection
\label{thm:l1}
Let $X$ be a compact Kähler manifold of dimension $n$, $\alpha\in H^{1,1}(X,\R)$ a big class, and $\mu=f dV$ a volume form with density $f \ge 0$ belonging to $L^{1+\ep}(dV)$ for some $\ep >0$. Then there exists a unique $\theta$-psh function $\vp$ with full Monge-Ampère mass such that $\la (\theta+\ddc \vp)^n \ra =e^{\vp} \mu$. Furthermore, $\vp$ has minimal singularities.
\end{theo}

\begin{proof}[Sketch of Proof]
The proof is almost the same as the one of \cite[Theorem 6.1]{BEGZ}, so we only give the main ideas. We may assume without loss of generality that $\mu$ has total mass $1$, and consider $\mathcal C$ the subset of $L^1(X,\mu)$ consisting of all $\theta$-psh functions $\psi$ normalized by $\sup_X \psi = 0$. Indeed, $PSH(X,\theta) \subset L^1(X, \mu)$ because any $\theta$-psh function is in $L^{p}_{\mathrm{loc}}$ for every $p>0$, so applying Hölder's inequality with $p=1+\frac 1 {\ep}$, we obtain the result. This set is convex, compact, so there exists $C>0$ such that $\int_X \psi d\mu \ge -C$ for all $\psi \in \mathcal C$, as explained in \cite[Proposition 1.7]{GZ05}. By convexity, one deduces in particular that $\log \int_X e^{\psi}d\mu \ge -C$ for all $\psi \in \mathcal C$.\\

For all $\psi \in \mathcal C$, the measure $e^{\psi} \mu$ has uniform $L^{1+\ep}$ density with respect to $dV$ because $\psi \le 0$. Therefore, \cite[Theorems 3.1 \& 4.1]{BEGZ} ensure the existence of a unique function $\Phi(\psi) \in \mathcal C$ such that $\MA(\Phi(\psi))=e^{\psi+c_{\psi}} \mu$, where $c_{\psi}=\log \vol(\alpha)- \log \int_X e^{\psi}d\mu \le \log \vol(\alpha)+C$, and $\Phi(\psi) \ge V_{\theta}-M$ for some uniform $M$.\\

\cite[Lemma 6.2]{BEGZ} shows that the map $\Phi: \mathcal C \to \mathcal C$ is continuous, therefore it has a fixed point $\psi\in \mathcal C$ by Schauder's fixed point theorem, and one concludes setting $\vp:=\psi+c_{\psi}$. As any $\theta$-psh function is bounded above on $X$, every solution of our equation has minimal singularities, and thus uniqueness follows from the straightforward generalization of \cite[Proposition 6.3]{BEGZ} in the setting of measures with $L^{1+\ep}$ density with respect to the Lebesgue measure. 
\end{proof}

\begin{rema}
\phantomsection
\label{rema2}
Let us note that it follows from the proof of this theorem that there exists $M$ depending only on $\theta, dV$ and $\ep$ such that the solution $\vp$ of $\la (\theta+\ddc \vp)^n \ra =e^{\vp} \mu$ satisfies $M \ge \vp \ge V_{\theta}-M ||f||^{1/n}_{L^{1+\ep}}$. Indeed, the only point is to control the constant $C$ appearing in the previous proof, but $C$ is bounded by $\sup \{||\psi||_{L^{1+1/\ep}} \cdotp ||f||_{L^{1+\ep}} \, ; \, \psi \in \mathcal C \}$ which is finite by compactness of $\mathcal C$ and equivalence of the $L^1$ and $L^p$ topology for quasi-psh functions.
\end{rema}

\section{Cone singularities for Kähler-Einstein metrics}

\subsection{Singular Kähler-Einstein metrics}
\label{red}

As we mentioned in the introduction, one can define the notion of Kähler-Einstein metric attached to any klt pair $(X,D)$. We refer e.g. to \cite{EGZ} or \cite{BBEGZ} for the definition in the non-positively curved case and in the log-Fano case respectively. What is important to remember about those objects is that they are currents on the singular variety $X$ which satisfy on any log-resolution $(X',D')$ ($D'$ being given by the identity $K_{X'}+D'=\pi^*(K_X+D)$) a Monge-Ampère equation of the form 
\[\MA(\phi) = e^{\pm(\phi-\phi_{D'})}  \leqno{(\mathrm{KE})} \] 
in the negatively or positively curved case (and a similar equation in the Ricci-flat case). Here $\phi$ is a (singular) psh weight on $\pm c_1(K_{X'}+D')$ (or in some given semiample and big class), $\phi_{D'}$ is a (singular) weight on the $\R$-line bundle $\Ox(D')$ such that $\ddc \phi_{D' }= [D']$, and $\MA$ is the non-pluripolar Monge-Ampère operator. \\

A Kähler-Einstein metric $\om=\ddc \phi $ attached to $(X,D)$ shall satisfy the equation 
\[\Ric \om = \mp \om + [D]  \]
(whenever $\Ric \om = -\ddc \log \om^n$ makes sense) or equivalently $\Ric \om' = \mp \om' + [D'] $ where $\om'=\pi^* \om$ for a log-resolution $\pi:X'\to X$ of $(X,D)$. \\

It is not completely clear that any Kähler-Einstein metric (as previously defined) should be smooth on $X_{\rm reg} \setminus \Supp(D)$, or equivalently on $X' \setminus \Supp(D')$. This work has been done for the log-Fano case in \cite{BBEGZ} using an estimate appearing in \cite{Paun}. We will also follow this strategy to obtain the smoothness on the suitable locus (cf Remark \ref{smo}). This strategy will also enable us to establish the cone singularities of the Kähler-Einstein metric).\\

There are two difficulties arising when one wants to understand the solutions of the Monge-Ampère equation $\KEl$: first of all, the right-hand side is singular, and also the class in which one looks for a solution is no more ample (or Kähler) but merely semi-ample and big. However, working with those weak positivity notions has the advantage that our equation is invariant by modification, and can be read equally on any log-resolution. Therefore, one can assume that the augmented base locus of $\pm \pi^*(K_X+D)$ (or $\pi^*\alpha$ for $\alpha$ a Kähler class on $X$, in the Ricci-flat case) is given by a divisor $E$ (in a sense to make clear) having simple normal crossing support, and meeting the strict transform of $D$ also normally. We will see shortly that this context is well adapted for our purposes. \\

Let us recall that in the case where $(X,D)$ is a log-smooth pair (with $D$ having coefficients in $(0,1)$) and $\pm (K_X+D)$ is ample, then the behavior of any Kähler-Einstein metric along $D$ is well understood: it has
so-called cone singularities (cf \cite{Brendle, CGP, Don, JMR}). We now want to show a similar statement in our situation; so let us first recall the notion of cone singularities.

\subsection{Metrics with cone singularities}
Let $X$ be a compact Kähler manifold of dimension $n$, and $\D=\sum a_i \D_i$ an effective $\R$-divisor with simple normal crossing support such that the $a_i$'s satisfy the following inequality: $0 < a_i < 1$. We write $X_0=X\setminus \Supp(\D)$, and we choose non-zero global sections $s_i$ of $\O_X(D_i)$.\\

Our local model is given by the product $X_{\rm mod}=(\mathbb{D}^*) \times \mathbb{D}^{n-r}$ where $\mathbb{D}$ (resp. $\mathbb{D}^*$) is the disc (resp. punctured disc) of radius $1/2$ in $\mathbb C$, the divisor being $D_{\rm mod}=d_1 [z_1=0]+\cdots+d_r [z_r=0]$, with $d_i<1$. We will say that a metric $\om$ on $X_{\rm mod}$ has cone singularities along the divisor $D_{\rm mod}$ if there exists $C>0$ such that 
\[C^{-1} \omega_{\rm mod} \le \om \le C \,\om_{\rm mod}\]
where
\[\om_{\rm mod}:=\sum_{j=1}^r \frac{i dz_j\wedge d\bar z_j}{|z_j|^{2d_j}} +\sum_{j=r+1}^n i dz_j\wedge d\bar z_j \] 
is simply the product metric of the standard cone metric on $(\mathbb{D}^*)^r$ and the euclidian metric on $\mathbb{D}^{n-r}$.\\

This notion makes sense for global (Kähler) metrics $\om$ on the manifold $X_0$; indeed, we can require that on each trivializing chart of $X$ where the pair $(X,\D)$ becomes $(X_{\rm mod},D_{\rm mod})$ (those charts cover $X$), $\om$ is equivalent to $\om_{\rm mod}$ just like above; of course this does not depend on the chosen chart.\\

In our case, we are going to deal with Kähler metrics no more on the whole $X_0$ but on some Zariski open subset, more precisely $X_0\cap \amp$ for some semi-positive and big (or nef and big) class $\alpha$ (in Theorem B, we work on $X_0\sm \Supp(E)$ for example). Therefore one needs to make precise what we will call cone singularities for such a metric. Indeed, $\amp$ being non compact in general (more precisely as soon as $\alpha$ is not ample), the bi-Lipschitz constant comparing the initial and the model cone metric in each local charts may not be chosen uniformly for all charts covering $\amp$. So we just do not ask it to be uniform, and therefore only care on what happens on compact subsets of $X_0 \cap \amp$.\\

\subsection{Statement of the main result}

We have seen in the introduction that Theorem A, which asserts that (under some assumptions) the Kähler-Einstein metric for a klt pair has cone singularities along the boundary (restricted to some suitable open subset). We also explained in the introduction and the previous section \ref{red} how to relate this assertion to some particular properties of solutions of degenerate Monge-Ampère equations. So let us now fix the set-up.\\

Let $X$ be a compact Kähler manifold of dimension $n$, $\alpha$ a nef and big class class and $\theta\in \alpha$ a smooth representative. Let $E=\sum c_j E_j$ be an effective $\R$-divisor with snc support such that $\alpha -E$ is Kähler. Let also $D= \sum a_i D_i$ an effective divisor with snc support such that $E$ and $D$ have no common components, and that $E+D$ has snc support. 

We write $X_0=X\setminus (\Supp(E) \cup \Supp(D))$, we choose non-zero global sections $t_j$ of $\O_X(E_j)$ and $s_i$ of $\O_X(D_i)$, and we choose some real numbers $b_j>-1$. We choose some smooth hermitian metrics on those bundles which we normalize so that $\int_X \prod |t_j|^{2b_j} \prod |s_i|^{-2a_i}dV=\vol(\alpha)$. Finally, if $\vp$ is a $\theta$-psh function, we denote by $\MA(\vp)$ the non-pluripolar product $\la (\theta+\ddc \vp)^n\ra$.\\

\begin{theo}
\phantomsection
We assume that the coefficients of $D$ satisfy the inequalities $a_i \ge 1/2$ for all $i$. 
Then any solution with full Monge-Ampère mass of
\[\MA(\vp)= \prod |t_j|^{2b_j} \frac{e^{\lambda \vp}dV}{\prod |s_i|^{2a_i}}\]
defines a smooth metric on $X_0$ having cone singularities along $D$.
\end{theo}

\section{Proof of Theorem B}
The strategy of the proof is to regularize the Monge-Ampère equation into an equation with smooth right-hand side. The stake will then consist in obtaining uniform estimates on the compact subsets of $X_0$ (cf Proposition \eqref{mp}). In order to simplify the notations, we will use the same notation for $\Supp(E)$ and $E$ whenever no confusion might result from this abuse of notation.

\subsection{Regularization}
\label{subs:approx}
We start from our solution $\vp$, and we regularize the Monge-Ampère equation in two ways: first, we regularize $\vp$ by approximating it with a decreasing sequence of smooth quasi-psh functions $\tau_{\ep}$ satisfying 
\begin{equation}
\label{unif}
\ddc \te \ge -C \om
\end{equation}
for some (Kähler) form $\om$. This is possible thanks to Demailly's regularization theorem \cite{D1, D2}. In particular the $\te$'s are uniformly upper bounded by $\sup \tau_1$ for instance. Then, we consider the following equation (in $\vpe$):

\[\la(\theta+\ddc \vpe)^n\ra = \prod |t_j|^{2b_j} \frac{e^{\lambda \te}dV}{\prod (|s_i|^2+\ep^2)^{a_i}} \leqno{(\MA_{\ep})}\]
By multiplying $dV$ with a constant, we can make sure that the total mass of the RHS is $\vol(\alpha)$); this constant depends on $\ep$, but in a totally harmless way because $ \frac{e^{\lambda \te}\prod |t_j|^{2b_j}}{\prod (|s_i|^2+\ep^2)^{a_i}} $ is uniformly bounded in $L^1(dV)$ : this is clear if $\lambda \ge 0$ and follows from the monotone convergence combined with Theorem \ref{zln} if $\lambda<0$: indeed, this result of \cite{BBEGZ} shows that $e^{-\vp}\in L^p(dV)$ for all $p\ge 1$. Therefore we can assume that the volume form is already normalized.\\

By \cite{BEGZ}, we know that $(\MA_{\ep})$ has a unique solution $\vpe$ which is $\theta$-psh and has minimal singularities. One could also deduce a uniform estimate, but at that point we do not really need it, and we will anyway recover it implicitly with Proposition \ref{mp}.

\subsection{Laplacian estimates}

In this section, we explain in Proposition \ref{mp} how to obtain laplacian estimates for our regularized solutions: this is the key result for the proof of the main theorems. It is an adaptation to \cite[Theorem 10.1]{BBEGZ} in the nef and big case, with a slight refinement (cf point $(iii)$) which will be crucial for us. Before we can state the result, let us introduce some notation. \\

We recall that $\alpha$ is a nef and big class and $E$ is an effective $\R$-divisor such that $\alpha-E$ is Kähler. We choose $s_E$ a non-zero section of the $\Ox(E)$, and choose some smooth hermitian metric $h$ on this $\R$-line bundle; we set \[\rho:= \log |s_E|_h^2\] (actually $s_E$ is not well-defined if $E$ is not a $\Z$-divisor, but $\rho$ is). Then, we define 
\[\omr:= \theta - \Theta_h(E)\]
which is a Kähler form if $h$ is properly chosen ($\Theta_h(E)$ is the Chern curvature of the hermitian $\R$-line bundle $(\Ox(E),h)$). As \[\theta+\ddc \rho = \omr+[E]\] we see that $\omr$ coincide with $\theta+\ddc \rho$ on 
$X\setminus E$. 
 
We emphasize that $\omr$ depends on the smooth hermitian metric chosen on $E$, or equivalently it depends on $\rho$, hence the notation. In the following proposition, we make explicit the precise dependence in $h$ (or $\rho$) in the laplacian estimates obtained in \cite[Theorem 10.1]{BBEGZ}:

\begin{prop}
\phantomsection
\label{mp}
With the previous notations, let $\ppm$ be quasi-psh functions on $X$  such that $e^{-\psm}\in L^p(dV)$ for some $p>1$, and satisfying $\int_X  e^{\pp-\psm} \omr^n = \vol(\alpha)$ . Let $\vp$ be a $\theta$-psh function solution of 
\[\la (\theta + \ddc \vp)^n \ra = e^{\pp-\psm}\omr^n \]
and assume given a constant $C>0$ such that 
\begin{enumerate}
\item[$(i)$] $\ddc \pp \ge -C \omr  \,\,$ and $\, \,\sup_X \pp \le C$; 
\item[$(ii)$]  $\ddc \psm \ge -C \omr  \,\,$ and $\, \, |\!|e^{-\psm} \omr^n/dV |\!|_{L^p(dV)}  \le C$; 
\item[$(iii)$] $\omr \ge C^{-1}\om$ and the holomorphic bisectional curvature of $\om_{\rho}$ is bounded below on $X$ by $-C$.
\end{enumerate}
Then there exist $A,B>0$ depending only on $\theta, p$ and $C$ such that on $X\setminus E$:
\[0 \le \theta + \ddc \vp  \le A e^{-B \rho-\psm} \omr\] 
\end{prop}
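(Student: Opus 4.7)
The plan is to apply the maximum principle on $X\setminus E$ to a suitably chosen auxiliary function, along the lines of Paun's Laplacian estimate as used in \cite{BBEGZ}. Write $\om_\vp := \theta+\ddc\vp$; it suffices to bound $\tr_{\omr}(\om_\vp)$ from above by $A'e^{-B\rho-\psm}$ since the largest eigenvalue of $\om_\vp$ relative to $\omr$ is controlled by the trace. First, a reduction: I replace $\pp,\psm$ by decreasing smooth quasi-psh approximations (via Demailly) satisfying the same hypotheses up to an arbitrarily small loss in $C$; elliptic regularity then makes $\vp$ smooth on $X\setminus E$, and the $L^p$-density hypothesis (ii) combined with Theorem \ref{thm:l1} and Remark \ref{rema2} ensures $\vp$ is uniformly bounded on $X\setminus E$. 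Since the constants $A,B$ in the target estimate depend only on $\theta, p, C$, one can prove the estimate in this smooth setting and pass to the limit.

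Second, I consider on $X\setminus E$ the function
\[H \;:=\; \log \tr_{\omr}(\om_\vp) + B\rho + \psm - A\vp,\]
with $A,B>0$ large, to be chosen. Because $\rho\to -\infty$ along $E$ while $\psm,\vp$ are bounded from above, $H\to -\infty$ near $E$; likewise $H\to -\infty$ near the poles of $\psm$. Hence $H$ attains its supremum at some interior $x_0\in X\setminus E$ where $\psm(x_0)>-\infty$, and the maximum principle gives $\Delta_{\om_\vp}H(x_0)\le 0$.

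Third, at $x_0$ the Aubin-Yau/Chern-Lu inequality, combined with the MA equation $\om_\vp^n = e^{\pp-\psm}\omr^n$ (which via $\ddc\log(\om_\vp^n/\omr^n)=\mathrm{Ric}(\omr)-\mathrm{Ric}(\om_\vp)$ ties the Ricci of $\om_\vp$ to that of $\omr$ modulo $\ddc(\pp-\psm)$) and the bisectional curvature lower bound (iii), yields
\[\Delta_{\om_\vp}\log\tr_{\omr}(\om_\vp) \;\ge\; -C\,\tr_{\om_\vp}(\omr) + \frac{\tr_{\om_\vp}(\mathrm{Ric}(\omr)) - \Delta_{\om_\vp}(\pp-\psm)}{\tr_{\omr}(\om_\vp)}.\]
Combined with $\Delta_{\om_\vp}\rho = \tr_{\om_\vp}(\omr) - \tr_{\om_\vp}(\theta)$ (since $\ddc\rho=\omr-\theta$ off $E$), $\Delta_{\om_\vp}\vp = n-\tr_{\om_\vp}(\theta)$, and $\Delta_{\om_\vp}\psm\ge -C\tr_{\om_\vp}(\omr)$ from (ii), the inequality $\Delta_{\om_\vp}H(x_0)\le 0$ rewrites as a scalar relation in $\tr_{\om_\vp}(\omr),\tr_{\om_\vp}(\theta)$ and $\tr_{\omr}(\om_\vp)$ at $x_0$. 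Choosing $A\gg B\gg C$ and using the AM-GM bound $\tr_{\om_\vp}(\omr)\ge n\,e^{(\psm-\pp)/n}$ (coming from the MA equation) to absorb the denominator, one extracts a uniform upper bound on $\tr_{\om_\vp}(\omr)(x_0)$; the algebraic inequality $\tr_{\omr}(\om_\vp)\le \tfrac{1}{(n-1)!}(\om_\vp^n/\omr^n)\,(\tr_{\om_\vp}(\omr))^{n-1}$ combined with the MA equation and the upper bound on $\pp$ then bounds $\tr_{\omr}(\om_\vp)(x_0)\,e^{B\rho(x_0)+\psm(x_0)}$, hence $H(x_0)$, and hence $H$ globally.

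The principal obstacle is the careful balancing of $A,B$: one must simultaneously dominate the curvature defect $-C\tr_{\om_\vp}(\omr)$, the $\omr$-Ricci term, the lower bounds on $\ddc\ppm$ and the constant $An$ arising from $\Delta_{\om_\vp}\vp$ by the positive contributions $B\tr_{\om_\vp}(\omr)$ and $A\tr_{\om_\vp}(\theta)$ --- in the correct order $C\ll B\ll A$ --- while using only the MA equation and hypotheses (i)--(iii). A secondary technical point is the rigorous legitimacy of the maximum principle when $\ppm$ are only quasi-psh, which is resolved by the Demailly smoothing of the first step together with a limit argument, all constants being uniform in the approximation.
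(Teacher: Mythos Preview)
Your overall strategy --- regularize $\psi^\pm$, apply a Siu/Aubin--Yau type inequality together with P\u aun's trick, and run the maximum principle on $X\setminus E$ for an auxiliary function of the form $\log\tr_{\omr}(\om_\vp)+\psm-A(\vp-\rho)$ --- is indeed the paper's strategy, and the differential inequalities you assemble are close to those used there. However, there is a genuine gap at the maximum principle step, and it is exactly the point that forces the paper to introduce an additional perturbation that you have omitted.

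You claim that $H=\log\tr_{\omr}(\om_\vp)+B\rho+\psm-A\vp$ tends to $-\infty$ near $E$ ``because $\rho\to-\infty$ while $\psm,\vp$ are bounded from above''. But $\vp$ being bounded from above only gives that $-A\vp$ is bounded \emph{from below}; to control $-A\vp$ from above you would need $\vp$ bounded from below, and the $L^\infty$ estimate you invoke only yields $\vp\ge V_\theta-M$, with $V_\theta$ in general unbounded when $\alpha$ is merely nef and big (Theorem~\ref{thm:l1} and Remark~\ref{rema2} concern the equation $\MA(\vp)=e^{\vp}\mu$ and in any case only give $\vp\ge V_\theta-M$). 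More seriously, even granting boundedness of $\vp$, you have said nothing about $\log\tr_{\omr}(\om_\vp)$ near $E$. After smoothing $\psi^\pm$, elliptic regularity gives smoothness of $\vp$ only on $\amp\supset X\setminus E$, and there is no a priori reason why $\tr_{\omr}(\om_\vp)$ should not blow up near $E$ faster than $e^{-B\rho}$; this is precisely the estimate you are trying to prove, so assuming $H$ is bounded above near $E$ is circular.

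The paper breaks this circularity by perturbing the class: one sets $\theta_t:=\theta+t\,\omr$ for $t>0$, so that $\{\theta_t\}$ is K\"ahler, and solves $(\theta_t+\ddc\vp_t)^n=e^{\pp-\psm+c_t}\omr^n$. Then $\vp_t$ is smooth on all of $X$ and $\omega_t':=\theta_t+\ddc\vp_t$ is a global K\"ahler form, so $\log\tr_{\omega_t}(\omega_t')$ is globally bounded for each fixed $t$; combined with $\rho\to-\infty$ this makes $H_t=\log\tr_{\omega_t}(\omega_t')+\psm-A_2(\vp_t-\rho)$ attain its maximum on $X\setminus E$ for purely qualitative reasons. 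The Laplacian computation (using \cite[Lemma~2.2]{CGP} and P\u aun's inequality) is then carried out exactly as you outline, yielding constants \emph{independent of $t$}, and one lets $t\to 0$. Without this perturbation the maximum principle is not available, and your argument does not close.
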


\begin{proof}
We will only treat the case where $\ppm$ are smooth; the general case can be reduced to the smooth case by regularization exactly in the same way that in \cite{BBEGZ}, the only difference being the use of the degenerate version of Ko\l odziej's stability theorem for \textit{big} classes given in \cite[Theorem C]{GZ11}.

For $t>0$ (say $t\le 1$), we consider the Kähler form $\omt$ on $X$ defined by $\omt:=(1+t)\omr$; we also define $\theta_t:=\theta+ t\omr$. As $\alpha$ is nef, $\{\theta_t\}$ is a Kähler class for all $t>0$. 
Therefore there exists a unique normalized (smooth) $\theta_t$-psh function $\vpt$ such that 
\[(\theta_t+ \ddc \vpt)^n = e^{\pp-\psm}e^{c_t}\omr^n\]
where $e^{c_t}=\vol(\alpha+t\{\omr\})/\vol(\alpha)$. As $\{\omr\}$ is independent of $\rho$ ($E$ is fixed), $c_t$ is uniformly bounded. 

We want to obtain an estimate $|\Delta_{\omr} \vp_t | \le A e^{-B \rho-\psm}$ on $X\sm E$; as  $\omr\ge \frac 12 \omt$, 
it will be enough to show the same estimate with $\Delta_{\omt} \vpt$. \\

To begin with, thanks to assumptions $(i)$ and $(ii)$, we have a $\mathscr C^0$ estimate given by \cite{BEGZ}, and recalled in section \ref{sct}: $\vpt \ge V_{\theta_t}-M$. Besides, it is easy to see that $V_{\theta_t}$ decreases to $V_{\theta}$ when $t\to 0$, so that $\vpt \ge V_{\theta}-M$ for all $t\ge 0$.\\ 

\textit{For the rest of the proof, we will work on $X\setminus E$, so that $\omr$ actually coincide with $\theta+\ddc \rho$}.\\

At this point of the proof, one cannot use Siu's formula \cite[pp. 98-99]{Siu} as in \cite{BBEGZ} anymore because we do not have an uniform upper bound on the scalar curvature of $\omt$. Instead, we use a variant of Siu's formula given in \cite[Lemma 2.2]{CGP} involving only a bound on the bisectional curvature of $\omt$; it yields: 
\[\Delta_{\omt'}\left( \log \tr_{\omt} (\omt')\right) \ge \frac{\Delta_{\omt}( \pp-\psm)}{ \tr_{\omt} (\omt')} -C\tr_{\omt'} (\omt)\]
where $\omt'= \theta_t+\ddc \vpt$. We notice, as in \cite{BBEGZ}, that even if \cite[Lemma 2.2]{CGP} is stated for two cohomologous forms, the last formula is valid because the computations are local, and locally all forms are cohomologous.\\

Using both inequalities $\omt \ge \omr$ and $n \le  \tr_{\omt} (\omt')  \tr_{\omt'} (\omt)$, we get then a constant $A_0>0$ under control such that: 
\[\Delta_{\omt'}\left( \log \tr_{\omt} (\omt')\right) \ge -\frac{\Delta_{\omt}\psm}{ \tr_{\omt} (\omt')} -A_0\tr_{\omt'} (\omt)\]

Now we use the computations of \cite{BBEGZ} based on \cite[Lemma 3.2]{Paun}: as $C \omt + \ddc \psm \ge 0$, we have 
\[0\le C \omt + \ddc \psm \le \tr_{\omt'}(C \omt + \ddc \psm) \omt' \]
Taking the trace with respect to $\omt$ gives:
\[0\le nC  + \Delta_{\omt} \psm \le \left(C\tr_{\omt'}(\omt) + \Delta_{\omt'} \psm\right) \tr_{\omt}(\omt') \]
so that 
\[ \Delta_{\omt'} \psm \ge -\frac{nC  + \Delta_{\omt} \psm}{\tr_{\omt}(\omt')}- C\tr_{\omt'}(\omt)\]
and therefore:
\begin{equation}
\label{eq1}
\Delta_{\omt'}\left( \log \tr_{\omt} (\omt')+\psm \right) \ge -A_1 \tr_{\omt'}(\omt)
\end{equation}
for some constant $A_1>0$ under control. \\

\noindent
Now, if we set \[u_t:=\vp_t-\rho\]
then $\omt'=\omt +\ddc u_t$ so that $n= \tr_{\omt'}(\omt) +\Delta_{\omt'} u_t$. Equation \eqref{eq1} gives us two positive constants $A_2,A_3$ under control satisfying:
\[ \Delta_{\omt'}( \log \tr_{\omt} (\omt') +\psm-A_2 u_t) \ge  \tr_{\omt'} (\omt)-A_3 \]

We want now to apply as usual the maximum principle to the term inside the laplacian in the right hand side. To ensure we can do this, we must check that the function 
\[H_t:=\log \tr_{\omt} (\omt') +\psm-A_2 u_t\]
 attains its maximum on $X\setminus E$. This is a qualitative problem, and of course we do not ask any kind of uniformity here. 
We know that $\psm$ is bounded (but we do not have uniform bounds), moreover $\tr_{\omt} (\omt') \le C \tr_{\om} (\omt')$ by assumption $(iii)$, and as $\omt'$ is smooth, this last quantity is bounded above. Finally, $-u_t = \rho-\vpt$ is upper bounded  and tends to $-\infty$ near $E$. We conclude that $H_t$
attains its maximum on at some point $x_t\in X\sm E $, so that  $\tr_{\omt'} (\omt)(x_t) \le A_3$ is under control.\\

Using the basic inequality 
\[ \tr_{\omt} (\omt') \le n \left(\omt'^n/\omt^n\right) \left(\tr_{\omt'} (\omt)\right)^{n-1} \] 
and the inequality 
\[\omt'^n/\omt^n = e^{\pp-\psm} \frac{e^{c_t}\omr^n}{\omt^n} \le e^{c_t+\pp-\psm} \]
we get
\[\log \tr_{\omt} (\omt')  \le -\psm +(n-1) \log \tr_{\omt'} (\omt) +A_4\]
with $A_4$ under control, and therefore
\[H_t \le (n-1) \log \tr_{\omt'} (\omt) -A_2 u_t+A_4\]
so that 
\[\sup_{X\sm E} H = H(x_t) \le A_5- A_2u_t(x_t)\]
Therefore, one has, for any $x\in X\setminus E$:
\begin{eqnarray*}
(\log \tr_{\omt} (\omt') +\psm)(x) &=& H(x)+A_2 u_t(x) \\
&\le & H(x_t)+A_2 u_t(x)\\
& \le & A_5+A_2(u_t(x)-u_t(x_t))\\
& \le & A_6 +A_2u_t(x)
\end{eqnarray*}

Indeed, $\vpt \ge V_{\theta}-M$ and $\rho$ is $\theta$-psh, thus $\vp_t \ge \rho - A_7$  so that $\inf_{X\setminus E} \vpt-\rho$ is uniformly bounded from below. As $\vp_t$ is normalized, $u_t\le -\rho$, so that one finally gets $A,B>0$ under control and satisfying: $\log \tr_{\omt} (\omt') +\psm \le A-B \rho$, which is what we were looking for.
\end{proof}

\begin{rema}
\phantomsection
\label{smo}
Combined with Evans-Krylov's theorem, this proposition shows that any Kähler-Einstein metric attached to a klt pair $(X,D)$ (satisfying e.g. $K_X+D$ ample, the other cases being similar) is smooth on $X_{\rm reg} \setminus \Supp(D)$. Indeed, if we work on a suitable log-resolution $X'\overset{\pi}{\longrightarrow} X$ of $(X,D)$ with $K_{X'}+D'= \pi^*(K_X+D)$, then the KE metric (viewed on $X'$) written as usual $\theta'+\ddc \vp'$ satisfies an equation of the form $(\theta'+\ddc \vp')^n = e^{\vp'-\vp_D'}dV$) and we just have to apply the previous proposition with $(\pp,\psm)=( \vp'+\vp_{D_-'},\vp_{D_+'})$ if $D'=D_+'-D_-'$ is the decomposition of $D'$ into its positive and negative part.
\end{rema}

\subsection{Approximation of the cone metric}
\label{app}

We now recall the global approximation of a cone metric, as explained in \cite[Section 3]{CGP} for instance.\\
Let $\om$ be a Kähler form on some compact Kähler manifold $Y$ carrying a $\R$-divisor $F=\sum c_k F_k$ with simple normal crossing support and coefficients $c_k \in ]0,1[$. Then for any sufficiently small $\ep>0$, there exists a smooth function $\psie$ such that the form $\ome$ on $Y$ defined by
\[ \ome:=\om+\ddc \psi_{\ep} \]
satisfies the following properties:
\begin{enumerate}
\item[$\cdot$] $\ome$ dominates a fixed Kähler form on $Y$;
\item[$\cdot$] $\psi_{\ep}$ is uniformly bounded (on $Y$) in $\ep$;
\item[$\cdot$] When $\ep$ goes to $0$, $\ome$ converges to some Kähler metric on $Y\setminus \Supp(F)$ having cone singularities along $F$.\\
\end{enumerate}

Our goal is now to apply this construction for some suitable Kähler form $\om$, and try to apply the laplacian estimates obtained in the previous section in order to get $C^{-1}\ome \le \theta+\ddc \vpe \le C\ome$ on compact subsets of $X\setminus E$.\\

We know that for some hermitian metric $h$ on $E$, the form $\omr=\theta -\Theta_h(E)$ is a Kähler form. So we can apply the previous approximation process to $Y=X$, $F=D$, and $\om=\omr$. We get a sequence of smooth functions $\psie$ such that $\omre:=\omr+\ddc \psie$ is a Kähler form satisfying the three conditions above. Moreover, $\omre$ corresponds to the Kähler metric $\theta-\Theta_{h_{\ep}}(E)$ where $h_{\ep}:=he^{\psie}$, or equivalently $\rhoe:=\rho+\psie $, and we still have that $\omre$ and $\theta+\ddc \rhoe$ coincide on $X\setminus E$.

\subsection{End of the proof}

Recall now that we try to understand the behaviour of the solution of 
\[\la(\theta+\ddc \vpe)^n\ra = \prod |t_j|^{2b_j} \frac{e^{\lambda \te}dV}{\prod (|s_i|^2+\ep^2)^{a_i}} \leqno{(\MA_{\ep})}\]
If we use the metric $\omre$ as new reference, equation $(\MA_{\ep})$ may be rewritten: 
\[\la(\theta+\ddc \vpe)^n\ra=\prod |t_j|^{2b_j} e^{\lambda \te + F_{\ep}} \omre^n\]
where
\[F_{\ep}=\log \left( \frac{dV}{\prod (|s_i|^2+\ep^2)^{a_i} \omre^n}\right). \]

\vspace{5mm}
In order to use the same notations as Proposition \ref{mp}, we set: 
\[ \pp:=\sum_{b_j>0} b_j \log |t_j|^2+\lambda \te+ F_{\ep}, \quad \psm:=\sum_{b_j<0} -b_j\log |t_j|^2\]
if $\lambda \ge 0$, and 
\[ \pp:=\sum_{b_j>0} b_j \log |t_j|^2+ F_{\ep}, \quad \psm:=-\lambda \te +\sum_{b_j<0} -b_j\log |t_j|^2\]
else. We should add that despite the notations, $\pp$ and $\psm$ actually depend on $\ep$.

\noindent
With these notations, equation $(\MA_{\ep})$ becomes: 
\[\la(\theta+\ddc \vpe)^n\ra=e^{\pp-\psm} \omre^n \leqno{(\MA_{\ep}')} \]

\vspace{3mm}
On the compact subsets of $X\setminus E$, $\omre$ converge to a Kähler metric with cone singularities along $D$. Therefore the proof of Theorem B boils down to showing that on each relatively compact open subset $U \Subset X\setminus E$, there exists a constant $C_U>0$ such that on $U$, we have for each $\ep>0$: 
\[ C_U^{-1} \omre \le \theta+\ddc \vpe \le C_U \omre \]
We fix such a open subset $U$. We want to apply Proposition \ref{mp} in our situation, so we need to check that there exists $C>0$ independent of $\ep$ satisfying: 

\begin{enumerate}
\item[$(i)$] $\ddc \pp \ge -C \omre  \,\,$ and $\, \,\sup_X \pp \le C$; 
\item[$(ii)$]  $\ddc \psm \ge -C \omre  \,\,$ and $\, \, |\!|e^{-\psm} \omre^n/dV'|\!|_{L^p}  \le C$; 
\item[$(iii)$] $\omre \ge C^{-1}\om$ and the holomorphic bisectional curvature of $\omre$ is bounded from below (on $X$) by $-C$.\\
\end{enumerate}

Let us begin with $(ii)$. The first thing to check is that $\ddc \te \ge -C \omre$. As $\omre$ dominates some uniform Kähler form (cf construction of the approximate cone metric, section \ref{app}), this follows from 
\eqref{unif}. Then, we have to check the uniform $L^p$ integrability condition.  We may assume that $\lambda \le 0$;  let us fix $\delta>0$ small enough, and more precisely 
\[\delta <  \min_{j} \, \frac{1}{b_j^-}-1\] 
where $b_j^- = \max(-b_j, 0)$.\\
Then by monotone convergence, $e^{\lambda(1+\delta) \te}$ is uniformly in $L^q$ for all $q \ge 1$ because so is $e^{\lambda(1+\delta) \vp}$ by Theorem \ref{zln}; we will fix an appropriate $q$ later. Moreover, by the normal crossing property and the klt condition, $(\prod_{b_j<0} |t_j|^{2b_j})^{1+\delta}$ is also uniformly in $L^{1+\eta}(dV)$ for some $\eta>0$ small enough. Therefore, taking $q=1+\frac{1}{\eta}$, we get the result. \\

There remain two non-trivial estimates to check, namely that $\ddc F_{\ep} \ge -C \omre$, and that the holomorphic bisectional curvature of $\omre$ is uniformly bounded below on $X\setminus E$. This is precisely at this point of the proof that we use in a crucial manner the assumption that the coefficients of $D$ are in $[\frac 12, 1)$. 

Indeed the first estimate is already obtained in \cite[\textsection 4.5]{CGP} in the slightly weaker form $\Delta_{\omre} F_{\ep} \ge -C$, and is proven in the desired form in \cite[\textsection 4.2.3]{G12}. As for the second estimate, concerning the curvature of $\omre$, it is also proven in \cite[\textsection 4.3-4.4]{CGP}. Finally, as for the bound $F_{\ep} \le C$, it is quite easy and explained in the same references. \\

Therefore, we can legitimately apply Proposition \ref{mp} to equation $(\MA_{\ep}')$, which ends the proof of Theorem B.

\begin{rema}
Let us emphasize that we may use the computations of \cite{CGP} to estimate e.g. the curvature of $\omre$ because this last metric is of the form $\omr+\ddc \psie$ for some \textit{fixed} Kähler metric $\omr$ on the whole $X$. It is not clear to us whether the same regularization argument could be performed directly on $\amp$ (ie without choosing a suitable compactification) as in \cite[Theorem 10.1]{BBEGZ}.
\end{rema}

\subsection{A slight generalization}

We should mention that we do not use in the proof of Theorem B the fact that the factors $|t_j|^{-2b_j}$ in the RHS of the Monge-Ampère equation are coming from the "non-ample part" $E$. Therefore, on could also choose $t_j$ to be a section of $\Ox(D_j)$ for some component $D_j$ with coefficient $a_j<1/2$. This leads to the following generalization of Theorem A:

\begin{theo}
\phantomsection
\label{gen}
Let $(X,D)$ be a klt pair. We write $D=D_{\ge \frac 1 2}+D_{<\frac 1 2}=\sum_{a_i\ge \frac 1 2} D_i+\sum_{a_i<\frac 1 2}D_i$, and we set $\mathrm{LS}(X,D_{\ge \frac 1 2}):= \{x\in X; (X,D_{\ge \frac 1 2})$ is log-smooth at $x\}$.
\begin{enumerate}
\item[$(i)$] If $K_X+D$ is big, then the Kähler-Einstein metric of $(X,D)$ has cone singularities along $D_{\ge \frac 1 2}$ on $\mathrm{LS}(X,D_{\ge \frac 1 2})\cap \mathrm{Amp}(K_X+D)\sm \Supp(D_{<\frac 1 2})$. 
\item[$(ii)$] If $-(K_X+D)$ is ample, then any Kähler-Einstein metric of $(X,D)$ has cone singularities along $D_{\ge \frac 1 2}$ on $\mathrm{LS}(X,D_{\ge \frac 1 2})\sm \Supp(D_{<\frac 1 2})$.
\end{enumerate}
\end{theo}

\bibliographystyle{smfalpha}
\bibliography{biblio.bib}

\newcommand{\etalchar}[1]{$^{#1}$}
\providecommand{\bysame}{\leavevmode ---\ }
\providecommand{\og}{``}
\providecommand{\fg}{''}
\providecommand{\smfandname}{\&}
\providecommand{\smfedsname}{\'eds.}
\providecommand{\smfedname}{\'ed.}
\providecommand{\smfmastersthesisname}{M\'emoire}
\providecommand{\smfphdthesisname}{Th\`ese}
\begin{thebibliography}{BCHM10}

\bibitem[BBE{\etalchar{+}}11]{BBEGZ}
{\scshape R.~Berman, S.~Boucksom, P.~Eyssidieux, V.~Guedj {\normalfont
  \smfandname} A.~Zeriahi} -- {\og {K{\"a}hler-Einstein metrics and the
  K{\"a}hler-Ricci flow on log-Fano varieties}\fg}, \emph{arXiv:1111.7158v2}
  (2011).

\bibitem[BBGZ09]{BBGZ}
{\scshape R.~Berman, S.~Boucksom, V.~Guedj {\normalfont \smfandname}
  A.~Zeriahi} -- {\og {A variational approach to complex Monge-Amp{\`e}re
  equations}\fg}, \emph{Publ. IHES (to appear) arXiv:0907.4490} (2009).

\bibitem[BBP10]{BBP}
{\scshape S.~Boucksom, A.~Broustet {\normalfont \smfandname} G.~Pacienza} --
  {\og {Uniruledness of stable base loci of adjoint linear systems with and
  without Mori Theory}\fg}, \emph{arXiv:0902.1142} (2010).

\bibitem[BCHM10]{BCHM}
{\scshape C.~Birkar, P.~Cascini, C.~Hacon {\normalfont \smfandname}
  J.~McKernan} -- {\og {Existence of minimal models for varieties of log
  general type}\fg}, \emph{J. Amer. Math. Soc.} \textbf{23} (2010),
  p.~405--468.

\bibitem[BD12]{BD}
{\scshape R.~Berman {\normalfont \smfandname} J.-P. Demailly} -- {\og
  Regularity of plurisubharmonic upper envelopes in big cohomology classes\fg},
  in \emph{Perspectives in analysis, geometry, and topology}, Progr. Math.,
  vol. 296, Birkh{\"a}user/Springer, New York, 2012, p.~39--66.

\bibitem[BEGZ10]{BEGZ}
{\scshape S.~Boucksom, P.~Eyssidieux, V.~Guedj {\normalfont \smfandname}
  A.~Zeriahi} -- {\og {Monge-Amp\`ere equations in big cohomology
  classes.}\fg}, \emph{Acta Math.} \textbf{205} (2010), no.~2, p.~199--262.

\bibitem[Ber09]{Ber2}
{\scshape R.~Berman} -- {\og Bergman kernels and equilibrium measures for line
  bundles over projective manifolds\fg}, \emph{Amer. J. Math.} \textbf{131}
  (2009), no.~5, p.~1485--1524.

\bibitem[Bou04]{DZD}
{\scshape S.~Boucksom} -- {\og Divisorial {Z}ariski decompositions on compact
  complex manifolds\fg}, \emph{Ann. Sci. {\'E}cole Norm. Sup. (4)} \textbf{37}
  (2004), no.~1, p.~45--76.

\bibitem[Bre11]{Brendle}
{\scshape S.~Brendle} -- {\og {Ricci flat K{\"a}hler metrics with edge
  singularities }\fg}, \emph{arXiv 1103.5454} (2011).

\bibitem[CGP11]{CGP}
{\scshape F.~Campana, H.~Guenancia {\normalfont \smfandname} M.~P\u{a}un} --
  {\og {Metrics with cone singularities along normal crossing divisors and
  holomorphic tensor fields}\fg}, \emph{arXiv:1104.4879} (2011).

\bibitem[Deb01]{Deb}
{\scshape O.~Debarre} -- \emph{Higher-dimensional algebraic geometry},
  Universitext, Springer-Verlag, New York, 2001.

\bibitem[Dem82]{D1}
{\scshape J.-P. Demailly} -- {\og Estimations {$L^{2}$} pour l'op{\'e}rateur
  {$\bar \partial $} d'un fibr{\'e} vectoriel holomorphe semi-positif au-dessus
  d'une vari{\'e}t{\'e} k{\"a}hl{\'e}rienne compl{\`e}te\fg}, \emph{Ann. Sci.
  {\'E}cole Norm. Sup. (4)} \textbf{15} (1982), no.~3, p.~457--511.

\bibitem[Dem92]{D2}
\bysame , {\og Regularization of closed positive currents and intersection
  theory\fg}, \emph{J. Algebraic Geom.} \textbf{1} (1992), no.~3, p.~361--409.

\bibitem[Don10]{Don}
{\scshape S.~Donaldson} -- {\og {K{\"a}hler metrics with cone singularities
  along a divisor}\fg}, \emph{arXiv: 1102.1196} (2010).

\bibitem[EGZ09]{EGZ}
{\scshape P.~Eyssidieux, V.~Guedj {\normalfont \smfandname} A.~Zeriahi} -- {\og
  {Singular K{\"a}hler-Einstein metrics}\fg}, \emph{{J. Amer. Math. Soc.}}
  \textbf{22} (2009), p.~607--639.

\bibitem[ELM{\etalchar{+}}06]{ELMNP}
{\scshape L.~Ein, R.~Lazarsfeld, M.~Mustata, M.~Nakamaye {\normalfont
  \smfandname} M.~Popa} -- {\og {Asymptotic invariants of base loci}\fg},
  \emph{Ann. Inst. Fourier (Grenoble)} \textbf{56} (2006), no.~6,
  p.~1701--1734.

\bibitem[Gue12]{G12}
{\scshape H.~Guenancia} -- {\og {K{\"a}hler-Einstein metrics with mixed
  Poincar{\'e} and cone singularities along a normal crossing divisor}\fg},
  \emph{arXiv:1201.0952} (2012).

\bibitem[GZ05]{GZ05}
{\scshape V.~Guedj {\normalfont \smfandname} A.~Zeriahi} -- {\og {Intrinsic
  capacities on compact K{\"a}hler manifolds.}\fg}, \emph{J. Geom. Anal.}
  \textbf{15} (2005), no.~4, p.~607--639.

\bibitem[GZ07]{GZ07}
{\scshape V.~Guedj {\normalfont \smfandname} A.~Zeriahi} -- {\og {The weighted
  Monge-Amp{\`e}re energy of quasi plurisubharmonic functions}\fg}, \emph{J.
  Funct. An.} \textbf{250} (2007), p.~442--482.

\bibitem[GZ11]{GZ11}
{\scshape V.~Guedj {\normalfont \smfandname} A.~Zeriahi} -- {\og {Stability of
  solutions to complex {Monge-Amp{\`e}re} equations in big cohomology
  classes}\fg}, \emph{arXiv:1112.1519} (2011).

\bibitem[Har77]{Har}
{\scshape R.~Hartshorne} -- \emph{Algebraic geometry}, Springer-Verlag, New
  York, 1977, Graduate Texts in Mathematics, No. 52.

\bibitem[JMR11]{JMR}
{\scshape T.~Jeffres, R.~Mazzeo {\normalfont \smfandname} Y.~Rubinstein} --
  {\og {K{\"a}hler-Einstein metrics with edge singularities}\fg},
  \emph{arXiv:1105.5216} (2011), with an appendix by C. Li and Y. Rubinstein.

\bibitem[MR12]{MR}
{\scshape R.~Mazzeo {\normalfont \smfandname} Y.~A. Rubinstein} -- {\og The
  {R}icci continuity method for the complex {M}onge--{A}mp{\`e}re equation,
  with applications to {K}{\"a}hler--{E}instein edge metrics\fg}, \emph{C. R.
  Math. Acad. Sci. Paris} \textbf{350} (2012), no.~13-14, p.~693--697.

\bibitem[P{\u{a}}u08]{Paun}
{\scshape M.~P{\u{a}}un} -- {\og {Regularity properties of the degenerate
  Monge-Amp{\`e}re equations on compact K{\"a}hler manifolds.}\fg}, \emph{Chin.
  Ann. Math., Ser. B} \textbf{29} (2008), no.~6, p.~623--630.

\bibitem[Siu87]{Siu}
{\scshape Y.-T. Siu} -- \emph{{Lectures on Hermitian-Einstein Metrics for
  Stable Bundles and K{\"a}hler-Einstein Metrics}}, Birkh{\"a}user, 1987.

\bibitem[Sko72]{Sko}
{\scshape H.~Skoda} -- {\og Sous-ensembles analytiques d'ordre fini ou infini
  dans {${\bf C}^{n}$}\fg}, \emph{Bull. Soc. Math. France} \textbf{100} (1972),
  p.~353--408.

\end{thebibliography}

\end{document}